\newtheorem{theorem}{Theorem}
\newtheorem*{theorem*}{Theorem}
\newtheorem{lemma}[theorem]{Lemma}
\newtheorem{corollary}[theorem]{Corollary}
\theoremstyle{definition}
\newtheorem*{problem*}{Problem}
\DeclareMathOperator{\mex}{mex}
\begin{document}

\title{PRIMES STEP Plays Games}

\author{Pratik Alladi \and Neel Bhalla \and Tanya Khovanova \and Nathan Sheffield \and
Eddie Song \and William Sun \and Andrew The \and Alan Wang \and
Naor Wiesel \and Kevin Zhang \and Kevin Zhao}
\date{}

\maketitle

\begin{abstract}
A group of students in 7-9 grades are inventing combinatorial impartial games. The games are played on graphs, piles, and grids. We found winning positions, optimal strategies, and other interesting facts about the games.
\end{abstract}


\section{Introduction}

This project is a part of the PRIMES STEP program that allows students in grades seven through nine to try research in math. The group was mentored by Tanya Khovanova.  The group studied different aspects of mathematics including combinatorial game theory. Then the students invented and analyzed their own games. Each game has its one flavor.

The \textit{Chocolate Stones} game in Section~\ref{sec:cs} sounds complicated, but it is a no-strategy game. 

The \textit{Diamond} game in Section~\ref{sec:diamonds} is a game with geometrical shapes. Surprisingly, for any non-trivial starting position the second player wins. We also discuss generalizations of this game to other shapes.

The \textit{Demon Money} game in Section~\ref{sec:dm} looks like it might be a no-strategy game, but is not. However, in the optimal play the number of moves is uniquely defined by the starting position.

The \textit{Sum-from-Product} game in Section~\ref{sec:sfp} combines addition and multiplication. The game is difficult to analyze, and we provide the computational answer: the list of P-positions.

The \textit{Remove-a-Square} game in Section~\ref{sec:rs} is another geometric game played on different shapes on a square grid. We completely analyze this game for 2-by-$n$ rectangles.

The \textit{Remove-an-Edge} game in Section~\ref{sec:re} is played on a graph. We analyze this game for complete, star, path, and cycle graphs. For a complete graph this game is a no-strategy game with a fixed number of moves. For a star graph this is a trivial game with one move. For a path graph this game is equivalent to the domino-covering game. The most interesting case we study is this game on the cycle graph.

The \textit{No-Factor} game in Section~\ref{sec:nf} is played on a range of numbers. The second player can be proven to have a winning strategy by a stealing strategy argument.

\section{Preliminaries}

We study impartial combinatorial games. There are two players and they have the same set of moves available to them.

A \emph{P-position} is a position from which the \emph{previous} player wins given perfect play. All terminal positions are P-positions. An \emph{N-position} is a position from which the \emph{next} player wins given perfect play. When we play we want to end our move with a P-position and want to see an N-position before our move. Any move from a P-position should be to an N-position. Also, there should exists a move from an N-position to a P-position.

Many games are solved by first providing a set of alleged P-positions and N-positions. To prove that this set is correct, it is enough to show that there exists a move from an N-position to a P-position and that any move from a P-position is to an N-position \cite{BCG, AND}. All the terminal positions are P-positions.

\section{Chocolate Stones game}\label{sec:cs}

Many combinatorial games use stones or tokens \cite{BCG, AND}. There is also a famous game of Chomp that is played on a chocolate bar. In honor of both types of games we call this game Chocolate Stones.

\textbf{Game.} Fix number $m$. Given a pile of $N$ chocolate stones a player is allowed to take away any number of stones from remainder of $N$ modulo $m$ up to $m$. If $N$ is divisible by $m$, a player is only allowed to take $m$ stones. The game ends when there are no moves.

\textbf{Example.} If $m=2$, then the players are allowed to take 1 or 2 stones from an odd pile and 2 stones from an even pile.

We can define a chocolate value $f$ of a pile of $N$ stones to be the ceiling of $\frac{N}{m}$: $f(N) = \lceil \frac{N}{m}\rceil$.  If  $N \neq 0$, then $f \neq 0$. That means the game continues while $f > 0$.

\begin{theorem}
The Chocolate Stones game is a no-strategy game. The P-positions are such that $f(N)$ is even.
\end{theorem}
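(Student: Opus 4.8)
The plan is to reduce everything to a single observation: \emph{every legal move decreases the chocolate value $f$ by exactly $1$}. Granting this, the game from a pile of $N$ stones lasts exactly $f(N)$ moves no matter how either player plays, which is precisely the assertion that the game has no strategy; and since the player making the last move wins, the first player wins iff $f(N)$ is odd, i.e.\ the P-positions are exactly the $N$ with $f(N)$ even.

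So the first step is to restate the rules cleanly. Put $r := N \bmod m$. If $r \neq 0$, a move removes $k$ stones with $r \le k \le \min(m,N)$; note that when $0 < N < m$ (so $q := \lfloor N/m\rfloor = 0$) this forces $k = r = N$, emptying the pile. If $r = 0$ and $N \neq 0$, a move removes exactly $m$ stones. The terminal position is $N = 0$, with $f(0) = 0$.

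The second, and main, step is the key observation itself, proved by cases on $r$. If $r = 0$, write $N = qm$ with $q \ge 1$; the move goes to $(q-1)m$, and $f$ drops from $q$ to $q-1$. If $r \neq 0$, write $N = qm + r$ with $q \ge 0$ and $1 \le r \le m-1$, so $f(N) = q+1$; a legal removal of $k \in \{r,\dots,m\}$ (with $k = r$ forced when $q = 0$) sends $N$ to some $M$ in the interval $[(q-1)m + r,\, qm]$. This interval has length $m - r < m$, so $qm$ is the unique multiple of $m$ it contains; writing any such $M$ as $(q-1)m + s$ with $r \le s \le m$, or as $M = qm$, gives $f(M) = q$ in every case. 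Hence $f(M) = q = f(N) - 1$, and when $q = 0$ the only case $M = 0$ also satisfies $f(0) = 0 = f(N) - 1$.

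Finally I would assemble the conclusion, either directly (the game lasts $f(N)$ forced moves, so the outcome is the parity of $f(N)$) or in the P/N-verification format of Section~2: the set $\{N : f(N)\ \text{even}\}$ contains the terminal position $N = 0$; every move changes the parity of $f$, so each move from a claimed P-position lands in a claimed N-position; and from a claimed N-position, which has $f(N) \ge 1$ and hence $N \ge 1$, a move exists, and every move lands in a claimed P-position. I do not expect a genuine obstacle here; the only thing requiring attention is the pair of boundary cases $q = 0$ (pile smaller than $m$, where the range of legal $k$ is truncated by $N$) and $r = 0$, both handled by the case split above.
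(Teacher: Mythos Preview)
Your proposal is correct and follows essentially the same route as the paper: both proofs hinge on the single observation that every legal move lowers $f$ by exactly one, and then read off the parity conclusion. The paper unifies the two cases by writing $N = km + a$ with $1 \le a \le m$ (setting $a = m$ when $m \mid N$), while you split on $r = 0$ versus $r \neq 0$ and are a bit more explicit about the boundary case $q = 0$ where the pile is smaller than $m$; these are cosmetic differences only.
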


\begin{proof}
We can show that $f$ decreases by 1 for every move.

A player has a pile $N=km + a$, where $a$ is the remainder of $N$ modulo $m$, if $N$ is not divisible by $m$. If $N$ is divisible by $m$, we set $a=m$. Therefore, $N$ has a chocolate value of $k + 1$.  The player can take at minimum $a$ and at maximum $m$ stones.  The result is a number in the range $[(k - 1)m + a,km]$. The chocolate value of every number in this range is $k$. Therefore, one move is equivalent to subtracting 1 from the chocolate value. Thus, the winner is determined from the start: if the chocolate value is odd, the first player wins, otherwise the second player wins.
\end{proof}

\section{Diamond game}\label{sec:diamonds}

In this game tokens are placed on a plane. They are not treated as a pile like in many games. The geometry of the configuration plays a role.

\textbf{Game}. Tokens are put on the intersections of the grid lines on a square grid. A player is allowed to take any non-empty row or column of tokens. The game ends when there are no moves.

We call it the Diamond game, because the main shape we study is a diamond when the tokens are put on every point with integer coordinates $a$ and $b$, such that $|a| + |b| \leq c$, see Figure~\ref{fig:5d} for an example with $c = 2$. We also study generalizations of this game, where the starting position can form other shapes.

\begin{figure}[h!]
\centering
\includegraphics[scale=0.4]{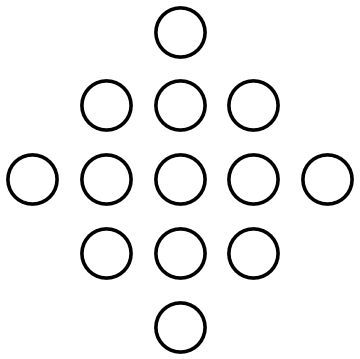}
\caption{5-diamond}
\label{fig:5d}
\end{figure}

The following lemma covers a special symmetric shape.

\begin{lemma}\label{lemma:noaxessym}
If the shape does not contain tokens on the axes and is symmetric with respect to both axes, then the second player wins.
\end{lemma}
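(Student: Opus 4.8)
The plan is to hand the second player an explicit mirroring (pairing) strategy based on the point reflection through the origin, $\sigma\colon (a,b)\mapsto(-a,-b)$. The key point is that $\sigma$ is the composition of the reflections across the two axes, so the hypothesis of the lemma guarantees that the starting configuration is $\sigma$-invariant. Moreover $\sigma$ sends the row $y=b$ to the row $y=-b$ and the column $x=a$ to the column $x=-a$; in particular it maps rows to rows and columns to columns. Since the shape has no tokens on either axis, every non-empty row has $b\neq 0$ and every non-empty column has $a\neq 0$, so $\sigma$ actually carries each occupied line to a \emph{different} line of the same type.

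The second player's strategy is then: whenever the first player deletes a line $\ell$, respond by deleting $\sigma(\ell)$. To see that this is always legal I would maintain the invariant that the configuration is $\sigma$-invariant immediately before each of the first player's moves. This holds initially. Assuming it holds, if the first player removes a non-empty line $\ell$, then $\sigma(\ell)$ is also non-empty and, as noted, $\sigma(\ell)\neq\ell$, so $\sigma(\ell)$ is untouched by the first player's move and the second player can legally remove it. Because $\sigma$ swaps the token sets lying on $\ell$ and on $\sigma(\ell)$ and fixes everything else, the configuration after this pair of moves is again $\sigma$-invariant, which restores the invariant.

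It follows that the second player always has a reply available, so the first player is the one who eventually faces the empty terminal position and loses. The only genuinely delicate point in carrying this out is the verification that $\sigma(\ell)$ is a line distinct from $\ell$: this is precisely where the ``no tokens on the axes'' hypothesis is essential, since an axis reflection by itself would fix a central row or column, and a shape symmetric about both axes that does meet an axis could leave the second player without a mirror move. Everything else is routine bookkeeping with the invariant.
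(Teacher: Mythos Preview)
Your proof is correct and follows essentially the same mirroring strategy as the paper. The paper phrases the response as ``reflect a horizontal move across the $x$-axis and a vertical move across the $y$-axis,'' while you package both cases at once via the point reflection $\sigma$; since $\sigma$ sends the row $y=b$ to $y=-b$ and the column $x=a$ to $x=-a$, the actual reply move is identical in both write-ups, and the ``no tokens on the axes'' hypothesis is used in exactly the same way to guarantee $\sigma(\ell)\neq\ell$.
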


\begin{proof}
If the first player makes a horizontal move, the second player mirrors the move with respect to $x$ axis. If the first player makes a vertical move, the second player mirrors the move with respect to $y$ axis. In any case after two moves the shape is symmetric with respect to both axes again, and can not be removed in one move.
\end{proof}

Let us go back to the diamond shapes. We can enumerate diamonds by the longest row. For example, we call the diamond in Figure~\ref{fig:5d} the 5-diamond. For the 1-diamond, the second player loses. We show later that for larger diamonds the second player wins. We call the token in position $(0,0)$, \textit{the center token}.

Here is the strategy for the second player.

\textbf{Strategy.} 
\begin{enumerate}
\item \textbf{End Game.} If tokens form a line, take the line. If tokens form two lines in one axial direction, then take a line in another direction.
\item \textbf{Mid-Game.} If the first player takes one of the axis, the second player takes the other. If the first player takes a line that is not the axis, but parallel to axis $a$, then the second player takes the mirror image of the line with respect to $a$.
\end{enumerate}

\begin{lemma}
The strategy above is a winning strategy for the second player.
\end{lemma}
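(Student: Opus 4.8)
The plan is to follow the game move by move and show that the first player is always left at a P-position; we may assume the diamond is not the $1$-diamond (a single token), for which the first player trivially wins. The starting point is a structural remark: since a legal move deletes \emph{all} tokens in some row or column, every reachable position is determined by the set $R$ of still-occupied row indices and the set $C$ of still-occupied column indices, and it equals $\{(x,y)\in D: x\in C,\ y\in R\}$, where $D$ is the starting diamond; a move simply deletes one index from $R$ or from $C$, so $R$ and $C$ always stay inside the initial symmetric index sets. Hence only these ``sub-diamonds'' ever occur, and whether the two axes are present is governed by whether $0\in R$ and $0\in C$.

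The heart of the proof is the invariant sustained by the Mid-Game rules: if the second player follows the strategy, then after each of the second player's moves the position is symmetric about both axes, $0\in R$ exactly when $0\in C$, and the position is not a single non-empty line. I would establish this by induction on the number of moves, with base case the full diamond. For the step, examine the first player's move from such a position. If the first player deletes a non-axis row $r\neq 0$, then the mirror row $-r$ is a distinct index that was occupied (by symmetry) and is untouched by the move, so the second player's reply is legal, and removing the symmetric pair $\{r,-r\}$ preserves all three clauses; columns are identical. If the first player deletes an axis, say row $0$, the second player deletes column $0$; this is legal because the position was not a single line, so $R\neq\{0\}$ and a token $(0,y)$ with $y\in R\setminus\{0\}$ survives on column $0$. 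After this double deletion the position is symmetric about both axes and has no tokens on either axis, and it is again the first player's turn --- so from there Lemma~\ref{lemma:noaxessym} closes out the game, since the Mid-Game mirroring rule is exactly the mirroring strategy of that lemma. (If the double deletion empties the board, the first player simply has no move.)

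What is left --- and what I expect to be the main obstacle --- is the End Game. A mirror reply would leave a single line only when, before the first player's move, exactly three parallel lines were occupied with the middle one the axis; one checks from the sub-diamond description that in precisely those cases the position handed to the second player is two parallel lines, so the End-Game clause takes over from the mirror rule. The delicate point is that ``take a line in another direction'' must be chosen carefully: a perpendicular line through the shorter of the two parallel lines can wipe it out and leave the first player a single line, which loses. Taking instead a perpendicular line that keeps both parallel lines non-empty (or deliberately hands the opponent a single line) reduces matters to a short finite list of small positions --- two parallel lines of nearly equal length, the $2\times 2$ square, and the ``plus''- and ``L''-shaped remnants --- and one verifies directly that each is a P-position when it is the first player's turn; here the ``axes deleted together'' clause does genuine work, e.g.\ excluding a $2\times 2$ square from ever being presented to the second player while the game is still running. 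Finally, every move strictly decreases the token count, so the game terminates, and by the invariant it terminates on the first player's turn; hence the second player wins.
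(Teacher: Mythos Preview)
Your Mid-Game treatment is sound and matches the paper: the $(R,C)$ description of reachable positions is correct, the mirror reply to a non-axis move is always legal (because $0$ is still in the perpendicular index set, so the mirrored line is non-empty), and once the first player removes an axis the other-axis reply hands the game to Lemma~\ref{lemma:noaxessym}. The split into ``axis eventually removed'' versus ``axis never removed'' is exactly the paper's split.

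The gap is in your End-Game paragraph. After the first player's move produces two horizontal lines, say rows $\{0,-r\}$ with column set $C$, this is \emph{not} a small configuration: $|C|$ can be arbitrarily large, and the two rows need not be ``of nearly equal length'' (row $0$ has $|C|$ tokens while row $-r$ has only $|\{x\in C:|x|\le c-r\}|$). So the claim that one finishes by checking ``a short finite list of small positions'' does not hold, and the plus- and L-shaped pieces you mention do not actually arise here. What is needed is a genuine argument covering all these two-line positions.

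The paper supplies that argument via parity. Because the Mid-Game mirror removes columns in pairs and $|C|$ started at $2c+1$, the column set $C$ is symmetric with $|C|$ odd when the two-line stage begins, and it is the second player's move. From this point, removing a row or removing the last column that meets row $-r$ leaves a single line and loses immediately; since column $0$ always meets both rows, each player always has a ``safe'' column to take. Hence optimal play for both sides is to remove columns one at a time, and since there are an odd number of them and the second player moves first in this phase, the second player takes the last one. This parity observation is the missing idea; once you insert it in place of the finite-case-check claim, your outline becomes a complete proof essentially identical to the paper's.
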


\begin{proof}
At the beginning the shape is symmetric with respect to both axis. If during the Mid-Game the first player removes one axis line, then the second axis line is not empty. The second player removes the second axis line. As a result, after these two moves the shape is symmetric with respect to both axes, and by Lemma~\ref{lemma:noaxessym} the second player wins. 

Now we assume that the first player does not remove an axis in the Mid-Game. At some point after the first player's move, the result will be one axis line and another parallel line. Without loss of generality we can assume that these two lines are horizontal. Moreover, the figure is symmetric with respect to the vertical axis. The figure also contains the center. The horizontal axis line contains an odd number of tokens. In this situation, no player wants to remove a horizontal line as it would mean a loss. So the players keep removing vertical lines making sure that at least one vertical line with two tokens stays. (One such line exists: it is a vertical axis line.) As there is an odd number of vertical lines available, the second player is guaranteed to be last.
\end{proof}

We can also solve this game for other starting configurations. First we consider a cross, where tokens are placed on both axes, including the center. Without loss of generality we assume that tokens form one consecutive segment on each axis. The game depends only on two numbers $m$ and $n$: the number of tokens on each axis.

\begin{theorem}
If the starting position is a cross, then the P-positions are such that $m+n$ is even and both $m$ and $n$ are more than 1.
\end{theorem}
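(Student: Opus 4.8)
The plan is to reduce the cross game to a tiny state space, write down a conjectured set of P-positions, and verify it by the criterion recalled in the Preliminaries (terminal positions are P, every move from a P-position is to an N-position, and from every N-position there is a move to a P-position).

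First I would set up the reduction. Any position reachable from a cross lives on the union of the two axes, so it is determined up to game-equivalence by a triple $(a,b,c)$, where $a$ is the number of surviving tokens of the horizontal arm other than the center, $b$ the analogous count for the vertical arm, and $c\in\{0,1\}$ records whether the center token survives; the exact coordinates and the choice of which side of the origin the segments lie on are irrelevant. The point is that each non-central token of the horizontal arm is the unique token in its column (and symmetrically for the vertical arm), so the available moves are exactly: delete one horizontal-arm token ($a\mapsto a-1$, legal when $a\ge 1$); delete one vertical-arm token ($b\mapsto b-1$, legal when $b\ge 1$); take the whole horizontal axis, reaching $(0,b,0)$ (legal when $a\ge 1$ or $c=1$); take the whole vertical axis, reaching $(a,0,0)$ (legal when $b\ge 1$ or $c=1$). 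The $m,n$-cross is the state $(m-1,n-1,1)$, and the theorem becomes the assertion that $(a,b,1)$ is a P-position iff $a,b\ge 1$ and $a+b$ is even.

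Next I would handle the $c=0$ states, which arise once somebody has taken a full axis. Such a state $(a,b,0)$ is a disjunctive sum of two independent copies of the trivial game ``$k$ isolated tokens in a line, where a move deletes one token or all of them.'' A short induction (or a Grundy computation) on this one-line game, followed by combining the two arms, shows that $(a,b,0)$ is a P-position iff $a=b=0$, or both $a,b\ge 1$ with $a\equiv b\pmod 2$. With that in hand, I would prove the $c=1$ claim by induction on $a+b$: the two ``delete one token'' moves flip the parity of $a+b$, while the two ``take an axis'' moves land in $(0,b,0)$ or $(a,0,0)$, whose status is governed by the previous sentence; one then checks that from a claimed P-position every move reaches a claimed N-position, and from a claimed N-position some move reaches a claimed P-position, splitting into cases on the parities of $a$ and $b$ and on whether each equals $0$.

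The step I expect to be the main obstacle is the boundary behavior at $a=0$ or $b=0$. From $(a,b,1)$ with one arm empty, taking the non-empty axis reaches the terminal position $(0,0,0)$ in a single move, so such states must be classified as N even though the parity condition alone might suggest otherwise; this is exactly the reason a genuine P-position needs both $a\ge 1$ and $b\ge 1$, and the verification must confirm that from a claimed P-position no ``take an axis'' move secretly reaches $(0,0,0)$ or any other claimed P-position. Keeping the induction hypothesis strong enough to cover all $c=0$ descendants, and organizing the parity/zero case split so that nothing is missed, is the real bookkeeping burden, though no single case is deep. The theorem then follows by specializing the $c=1$ characterization to $(m-1,n-1,1)$, since $a+b=m+n-2$ has the parity of $m+n$ and $a,b\ge 1$ is equivalent to $m,n>1$.
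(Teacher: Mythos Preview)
Your plan is correct and would yield a complete proof. The state encoding $(a,b,c)$ is sound, the move list is accurate, and the claimed classifications of both the $c=0$ and $c=1$ positions check out under the P/N verification criterion.

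Your route is noticeably more formal than the paper's. The paper argues in one paragraph: taking an entire axis while the other arm is nonempty hands the opponent a single line, so neither player will do it; hence optimal play consists of removing single arm tokens, each move decrementing $m+n$ by one while keeping $m,n\ge 2$, until the $m=n=2$ cross is reached, which is a P-position. That is essentially your ``take an axis lands in an N-position'' observation, followed immediately by the parity conclusion, without ever introducing the triple $(a,b,c)$ or invoking Sprague--Grundy. What your approach buys is rigor and completeness: you actually classify every reachable position and verify the P/N conditions case by case, whereas the paper's phrasing ``no player wants to remove one of the axes'' is an informal optimality claim. What the paper's approach buys is brevity. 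One remark: your disjunctive-sum analysis of general $(a,b,0)$ is more than you need, since from a cross the only $c=0$ states ever reached are $(0,b,0)$ and $(a,0,0)$, and those are trivially N for $b\ge 1$ (respectively $a\ge 1$) because one move clears the board. You could shorten that step accordingly.
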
 

\begin{proof}
No player wants to remove one of the axes, while other tokens are present. That means they remove one token on each move in such a way that the rest of the tokens do not form a line. This continues until the players get to the cross with $m=n=2$. At this point the second player wins.
\end{proof}

We now consider the game on an $m$-by-$n$ rectangle.

\begin{theorem}
If the starting position is an $m$-by-$n$ rectangle, then the P-positions are such that $m+n$ is even and both $m$ and $n$ are more than 1.
\end{theorem}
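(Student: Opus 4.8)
\section*{Proof proposal for the rectangle theorem}

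The plan is to reduce the game on a rectangle to a two-dimensional ``decrement'' game on pairs of integers, and then verify a guessed set of P-positions using the two-part criterion from Section~\ref{sec:diamonds}'s preliminaries (every move from an alleged P-position leaves the set; from every alleged N-position there is a move into it).

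First I would record the structural fact that makes the rectangle case clean. Call a token configuration a \emph{complete grid} if it has $a$ occupied rows, $b$ occupied columns, and a token at all $ab$ intersections. Removing a full row or a full column from a complete grid again yields a complete grid, with $a$ or $b$ decreased by $1$ (the surviving rows/columns need not be consecutive, but that is irrelevant: only their number matters for which moves are available). Since an $m$-by-$n$ rectangle is a complete grid, the whole game can be replayed on pairs $(a,b)$ with $a,b\ge 1$, where a move sends $(a,b)$ to $(a-1,b)$ or to $(a,b-1)$, and any pair with a zero coordinate is the terminal, token-free position.

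Next I would propose the P-position set $\mathcal{P}=\{\text{empty}\}\cup\{(a,b):a,b\ge 2,\ a+b\text{ even}\}$ and check the two conditions. For the first: the empty position has no moves, and from $(a,b)\in\mathcal{P}$ with $a,b\ge 2$ every move goes to $(a-1,b)$ or $(a,b-1)$, which is non-empty and has odd coordinate-sum, hence lies outside $\mathcal{P}$. For the second: a non-$\mathcal{P}$ position $(a,b)$ with $a,b\ge 1$ splits into cases. If $a=1$ (or $b=1$), take the entire single row (column) in one move and reach the empty position. If $a,b\ge 2$ and $a+b$ is odd, then $a\ne b$, so $\max(a,b)\ge 3$; decrementing the larger coordinate lands in $\mathcal{P}$. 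Translating back, the $m$-by-$n$ rectangle is a P-position exactly when $(m,n)\in\mathcal{P}$, i.e.\ when $m+n$ is even and $m,n>1$; this also recovers, as a side effect, the analogy with the cross theorem.

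The one place to be careful — and the only real subtlety — is the boundary: pairs $(1,b)$ and $(a,1)$ are N-positions even when $a+b$ is even, precisely because one whole row or column can be swept away in a single move to end the game, and this is exactly why the hypothesis ``$m,n>1$'' cannot be dropped. The reduction step itself is routine but should be stated explicitly, since after the opening moves the occupied rows and columns need no longer form a contiguous block; everything past that is the bookkeeping of the $\mathcal{P}$/non-$\mathcal{P}$ verification above.
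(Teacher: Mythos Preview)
Your proof is correct and follows essentially the same approach as the paper: both reduce the rectangle game to a decrement game on the pair of dimensions and then track the parity of $m+n$, with separate handling of the boundary where a coordinate drops to $1$. Your version is more explicit --- you state the reduction to ``complete grids'' (noting that the surviving rows and columns need not be contiguous, a point the paper silently assumes when it speaks of a ``$2$-by-$(m-1)$ rectangle'' after a move), and you carry out the full P/N verification rather than the paper's terser induction through the $2$-by-$m$ strip --- but the underlying idea is the same.
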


\begin{proof}
A rectangle 1-by-$n$ is an N-position and a 2-by-2 square is a P-position. Consider a 2-by-$m$ rectangle. In one move the players either get to a 1-by-$m$ or 2-by-$(m-1)$ rectangle. It follows that rectangles 2-by-$2k$ are P-positions. If both $m$ and $n$ are more than 2, then each move reduces the sum $m+n$ by 1. The theorem follows.
\end{proof}

As one may notice the P-positions for a cross and a rectangle are defined by the same rules.

\section{Demon Money game}\label{sec:dm}

Two people live in a city ruled by evil demons. Every day the demons require them to pay the square root of their total money in taxes. The two people pool their money and alternate paying the taxes every day. However, because the money are in gold coins, the person paying the taxes may choose to round the amount paid up or down. The person who pays the demons zero coins gets executed by the tyrannous demon king.

\textbf{Game.} Given a pile of $N$ coins players are allowed to take $\sqrt{N}$ coins rounded up or down. If $N = k^2$, then the only move is to take $k$ coins.

\textbf{Examples.} Empty pile is a P-position. It follows that piles of 1 or 2 coins are N-positions. From here, 3 and 4 are P-positions. Then 5, 6, and 7 are N-positions. Then 8, 9, 10 are P-positions and so on.

\begin{theorem}
For integer $k > 0$, the numbers $[k^2+k-1,(k+1)^2-2]$ are N-positions. The numbers $[k^2-1, k^2+k -2]$ are P-positions.
\end{theorem}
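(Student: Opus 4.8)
The plan is to apply the standard criterion recalled in the Preliminaries: it suffices to exhibit candidate sets of P- and N-positions, check that $0$ (the unique terminal position, since the player facing the empty pile pays $\sqrt0=0$ coins and is executed) lies in the P-set, that every move out of the P-set lands in the N-set, and that every position in the N-set has at least one move into the P-set. I would take the candidate P-set to be $\mathcal P=\bigcup_{k\ge1}[k^2-1,\,k^2+k-2]$ and the candidate N-set to be $\mathcal N=\bigcup_{k\ge1}[k^2+k-1,\,(k+1)^2-2]$. A preliminary bookkeeping step records two facts. First, since $(k+1)^2-1=(k^2+2k-1)+1$, these intervals are consecutive with no gaps and no overlaps, so $\mathcal P$ and $\mathcal N$ partition the nonnegative integers. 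Second, for $k^2\le N<(k+1)^2$ we have $\lfloor\sqrt N\rfloor=k$, so the legal moves from $N$ are to $N-k$ and, unless $N=k^2$, also to $N-(k+1)$; while from $N=k^2-1$ (for $k\ge2$) the moves are to $N-(k-1)=k^2-k$ and $N-k=k^2-k-1$.

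Next I would check that every move from $N\in[k^2-1,k^2+k-2]$ lands in $\mathcal N$, splitting into the three ranges $N=k^2-1$, $N=k^2$, and $k^2+1\le N\le k^2+k-2$. In each range one computes the one or two targets and verifies they all lie in the block $[(k-1)^2+(k-1)-1,\,(k-1)^2+2(k-1)-1]=[k^2-k-1,\,k^2-2]$, i.e.\ in $\mathcal N$; for instance $N=k^2$ moves only to $k^2-k=(k-1)^2+(k-1)$, which sits in that block. Then I would check that every $N\in[k^2+k-1,(k+1)^2-2]$ has a move into $\mathcal P$: if $N=k^2+k-1$ the move $N-k=k^2-1$ hits the left endpoint of $[k^2-1,k^2+k-2]$, and if $k^2+k\le N\le k^2+2k-1$ the move $N-(k+1)$ lands in $[k^2-1,k^2+k-2]$. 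Finally the base cases $k=1$ (the blocks $\{0\}$, $\{1,2\}$, $\{3,4\}$) are verified by hand, including that $0$ is terminal and hence a P-position.

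The main obstacle lies entirely in the edge cases, because each candidate P-interval $[k^2-1,k^2+k-2]$ straddles the perfect square $k^2$: at its left endpoint $k^2-1$ one rounds $\sqrt N$ to $k-1$ or $k$, at $k^2$ there is a single forced move of size $k$, and above $k^2$ one rounds to $k$ or $k+1$. Thus the ``subtract $\lfloor\sqrt N\rfloor$'' and ``subtract $\lceil\sqrt N\rceil$'' maps change behaviour inside a single interval, and one must track carefully that the slightly-too-low value $k^2-1$ still lands in the previous N-block and that the forced move from $k^2$ does as well; symmetrically, one must make sure the N-interval element $k^2+k-1$ still has the single good move $N-k$ into the P-block even though $N-(k+1)$ would overshoot it. Keeping these residue ranges straight, and separately dispatching the smallest $k$ where the $(k-1)$-indexed blocks do not yet exist, is the only real work; once the partition and the fact $\lfloor\sqrt N\rfloor=k$ on $[k^2,(k+1)^2)$ are in hand, each verification is a one-line inequality check.
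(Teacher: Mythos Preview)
Your proposal is correct and follows essentially the same approach as the paper: verify directly that every move from a claimed P-position lands in the claimed N-set and that every claimed N-position has a move into the claimed P-set. If anything, you are more careful than the paper about the edge case where the P-interval $[k^2-1,k^2+k-2]$ straddles the perfect square $k^2$, which the paper handles somewhat implicitly.
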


\begin{proof}
The terminal position is 0, which corresponds to a P-position for $k=0$.

Consider N-positions for given $k$: they are numbers between $k^2$ and $(k+1)^2$. Therefore, one move subtracts $k$ or $k+1$.

Subtracting $k$ from every element in the segment $[k^2+k-1,(k+1)^2-2]$ we get the segment $[k^2-1,k^2 +k-1]$. All the numbers but the last one are P-positions. That means we get to a P-position by subtracting $k$ from any number except the biggest number in the segment. In the latter case we subtract $k+1$, and get to a P-position. Thus, from any N-position there exists a move to a P-position.

Now we need to show that any move from a P-position goes to an N-position. Consider a P-position in the segment 
$[(k+1)^2-1, (k+1)^2+k -1]$. The smallest number we can reach in one move from this segment is $k^2 + k -1$ by subtracting $k+1$ from $(k+1)^2-1$. The largest number we can reach in one move from this segment is $(k+1)^2 -2$ by subtracting $k+1$ from $(k+1)^2+k -1$. In any case we end up in the segment $[k^2+k-1,(k+1)^2-2]$ which consists of N-positions. Thus any move from a P-position leads to an N-position.
\end{proof}

If the game is played optimally then the number of moves is fixed. If the starting position is in the range $[k^2+k-1,(k+1)^2-2]$, the game ends in $2k-1$ moves. If the starting position is in the range $[k^2-1, k^2+k -2]$, the game ends in $2k-2$ moves.

\section{Sum-from-Product}\label{sec:sfp}

This game combines the sum and the product of two numbers.

\textbf{Game.} Given a positive integer $n$, a player can choose any two integers $a$ and $b$, such that $ab=n$. The player subtracts $a+b$ from $n$, given that the result is positive. That is, the next player starts with a new number $n-a-b$. A player without a move loses.

We see that there are no moves available from prime numbers, 1, and 4. Otherwise, there exists a move. 

We wrote a program to calculate positions. P-Positions form a sequence: 1, 2, 3, 4, 5, 7, 11, 13, 16, 17, 19, 22, 23, 25, 27, 29, $\ldots$, which is now sequence A285304 in the OEIS \cite{OEIS}.

N-Positions form a sequence: 6, 8, 9, 10, 12, 14, 15, 18, 20, 21, 24, 26, 28, 30, $\ldots$, which is now sequence A285847 in the OEIS \cite{OEIS}.

\section{Remove-a-Square game}\label{sec:rs}

The following game is played on a square grid.

\textbf{Game.} Start with a shape made out of squares on a square grid. In a move a player is allowed to remove a full square. The player without a move loses.

The removed square can be of any size $k$-by-$k$.

\textbf{Example.} Suppose a starting position is a 3-by-3 square. The first player can win by removing the whole 3-by-3 square. The first player can prolong the pleasure by removing the 1-by-1 square in the center. After that all the available squares are 1-by-1, and the game ends in 9 moves. However, if the first player removes a 2-by-2 square, then there are no more full 2-by-2 squares available and the total number of moves is 6. If the first player removes a 1-by-1 square not in the center, then the second player can find a full 2-by-2 square to remove to guarantee a win. Thus the first player wins with the right play.

We study this game for a particular case of 2-by-$n$ rectangles. That means in one move a player can remove either a 1-by-1 or a 2-by-2 square.

If no one removes a 2-by-2 square, then the second player wins, as the total number of 1-by-1 squares is even. To control the win, the players must control the parity of the number of 2-by-2 squares removed.

\textbf{Example.} Suppose the players start with a 2-by-3 rectangle. If the first player removes a 1-by-1 square, the second player can remove another 1-by-1 square in the middle column. After that there are no full 2-by-2 squares, and the game ends in 6 moves. On the other hand, in the first move, the first player can remove a 2-by-2 square. After that there are two 1-by-1 squares left, and no full 2-by-2 squares. That means the game ends in three moves and the first player is guaranteed to win. 

\textbf{Example.} If we start with a 2-by-$2k$ rectangle, the first player wins. The first player can start by removing the middle 2-by-2 square. After that the game separates into two equal games, and the first player can match a move of the second player.

To solve this game we use the theory of Grundy numbers. Each game can be assigned a non-negative integer such that the P-positions are assigned zeros. The Grundy numbers are calculated recursively. The Grundy values of terminal positions are zero. Consider a position $A$. All moves from $A$ comprise a set of positions $\mathcal{A}$. Denote a set of Grundy numbers of $\mathcal{A}$ as $S$. Then the Grundy value of $A$ is $\mex (S)$, the least non-negative integer not in the set $S$. The sum of several games has a Grundy value that is equal to XOR (bitwise addition) of the Grundy values of those games. We use $\otimes$ to denote XOR.

Consider the first move. If a player removes a 2-by-2 square, then the result is the sum of two Remove-a-Square games on rectangles $2\times k$ and $2 \times (n-k-2)$. If a player removes a 1-by-1 square, then the result is the sum of three games on rectangles $2\times k$, $1\times 1$, and $2 \times (n-k-1)$. This allows us to calculate the Grundy values recursively noting that if the shape does not contain a 2-by-2 square then the Grundy number equals the parity of the total number of 1-by-1 squares.

We denote the Grundy number of a 2-by-$n$ rectangle as $G(n)$. Then $G(0) = G(1) = 0$ and
\[G(n) = \mex\{G(i) \otimes G(n-i-2), G(j) \otimes 1 \otimes G(n-j-1)\},\]
where $i \in [1,n-2]$ and $j \in [1,n-1]$.

We computed the first few Grundy values for this game represented in Table~\ref{table:rectGN}, where we have 12 columns to emphasize eventual periodicity. The value corresponding to $n = 12a+b$, where $1 \leq b \leq 12$ are in row $a+1$ and column $b$. The data shows Grundy numbers for $n$ from 1 to 192 inclusive. We can observe that for $83 \leq n  \leq 192$, the Grundy numbers are periodic: $G(n) = G(n-12)$.

\begin{table}
  \centering
    \begin{tabular}{| rrrrrrrrrrrr |}
    \hline
0&2&2&1&4&3&3&1&4&2&6&5 \\
0&2&7&1&4&3&3&1&4&7&7&5 \\
0&2&8&4&4&6&3&1&8&7&7&5 \\
0&2&2&1&4&6&3&1&8&2&7&5 \\
0&2&8&1&4&6&3&1&4&2&7&5 \\
0&2&8&1&4&6&3&1&8&7&7&5 \\
0&2&8&1&4&6&3&1&8&2&7&5 \\
0&2&8&1&4&6&3&1&8&2&7&5 \\
0&2&8&1&4&6&3&1&8&2&7&5 \\
0&2&8&1&4&6&3&1&8&2&7&5 \\
0&2&8&1&4&6&3&1&8&2&7&5 \\
0&2&8&1&4&6&3&1&8&2&7&5 \\
0&2&8&1&4&6&3&1&8&2&7&5 \\
0&2&8&1&4&6&3&1&8&2&7&5 \\
0&2&8&1&4&6&3&1&8&2&7&5 \\
0&2&8&1&4&6&3&1&8&2&7&5 \\
    \hline
    \end{tabular}
  \caption{Grundy numbers.}
\label{table:rectGN}
\end{table}

We prove that this pattern continues: the Grundy numbers are eventually periodic. 

\begin{theorem}
Given a 2-by-$n$ rectangle in the Remove-a-Square game, where $n  \geq 83$:
\[G(n) = G(n-12).\]
\end{theorem}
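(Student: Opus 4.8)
The plan is to prove eventual periodicity by a finite-check argument based on the recursive definition of $G(n)$. The key observation is that the recursion
\[G(n) = \mex\{G(i) \otimes G(n-i-2), \ G(j) \otimes 1 \otimes G(n-j-1)\},\]
with $i \in [1,n-2]$ and $j \in [1,n-1]$, refers only to values $G(m)$ with $m < n$, and in fact the arguments $i$, $n-i-2$, $j$, $n-j-1$ range over \emph{all} smaller indices. So if we can establish that $G(m) = G(m-12)$ for a long enough consecutive block of indices ending just below $n$, then every term appearing in the $\mex$ set for $G(n)$ equals the corresponding term in the $\mex$ set for $G(n-12)$, forcing $G(n) = G(n-12)$. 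The induction then propagates forever.

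First I would make precise what ``long enough'' means. When we shift $n$ to $n-12$, the pairs $(i, n-i-2)$ become $(i, n-12-i-2)$, i.e.\ we need $G(n-i-2) = G(n-12-i-2)$ for the full range, and similarly $G(i)$ is unchanged while $G(n-i-2)$ must be related to a value $12$ smaller. The subtlety is that for small $i$ the second coordinate $n-i-2$ is large (close to $n$), so periodicity there is exactly what we want to prove for indices just below $n$; for large $i$ the second coordinate is small and periodicity there must already be known. So the argument is really a \emph{simultaneous induction}: assume $G(m) = G(m-12)$ for all $m$ with $83 \le m < n$ (together with the computed base values for $m < 83$), and deduce it for $m = n$. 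For each term $G(i) \otimes G(n-i-2)$ in the set for $G(n)$: if $i \le n - 12$ and $n-i-2 < n$ — which is automatic — we compare with the term $G(i) \otimes G(n-12-i-2)$ in the set for $G(n-12)$; since $n-i-2 < n$, either $n-i-2 < 83$ (use the table, checking the table values are already periodic in that range, which they are for $n \ge 83$ by inspection) or the inductive hypothesis gives $G(n-i-2) = G(n-12-i-2)$. The remaining terms with $i \in [n-11, n-2]$ in the $G(n)$ set have second coordinate $n-i-2 \in [0,9]$, small and fixed, and these must be shown to already appear among the terms of the $G(n-12)$ set (with $i' = i - 12 \in [n-23, n-14]$, which is a legal range once $n \ge 83$ since then $n - 23 \ge 1$); here $G(i) = G(i-12)$ by the inductive hypothesis. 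A symmetric analysis handles the $G(j) \otimes 1 \otimes G(n-j-1)$ terms. Collecting everything, the two $\mex$ sets coincide, so $G(n) = G(n-12)$.

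The main obstacle is verifying that the constant $83$ (equivalently, the width of the already-known periodic block) is actually sufficient — that is, pinning down exactly how many consecutive periodic values are needed before the induction can start, and confirming from Table~\ref{table:rectGN} that that many are present. One must check that when $n = 83$ all arguments $i$, $j$, $n-i-2$, $n-j-1$, $n-12-i-2$, $n-12-j-1$ that get invoked either lie in $[1,82]$ with the table value already matching its value $12$ lower (true for the rows of the table from roughly $n = 71$ onward), or lie in $[83, n-1]$ and so are covered by the running induction; and that the index shifts $i \mapsto i-12$, $j \mapsto j-12$ never fall out of the allowed ranges. This is a bounded, purely mechanical case check, but it is where the specific value $83$ earns its place, and it is the part most prone to off-by-one errors. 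Once that base case is secured, the inductive step is the uniform argument sketched above and requires no further computation.
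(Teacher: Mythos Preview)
Your inductive strategy is the right overall approach—and the same one the paper takes—but your case split in the inductive step contains a genuine error that breaks the argument, independent of any base-case subtleties.

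In your Case 1 ($i \le n-12$), you match the term $G(i) \otimes G(n-i-2)$ from the $G(n)$ set with $G(i) \otimes G((n-12)-i-2)$ from the $G(n-12)$ set, which requires $G(n-i-2) = G((n-i-2)-12)$. You then claim that when $n-i-2 < 83$ one can ``use the table, checking the table values are already periodic in that range.'' This is false: the table is \emph{not} periodic below $83$. Concretely, for any large $n$ take $i = n - 72$; then $n-i-2 = 70$, and Table~\ref{table:rectGN} gives $G(70) = 7$ while $G(58) = 2$, so $G(70) \ne G(70-12)$. Your term-by-term matching therefore fails for every $i$ with $n-i-2$ landing in the non-periodic initial segment (roughly $[10,82]$)—about seventy values of $i$ for each $n$, not an edge case. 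This is an error in the inductive step itself, not something absorbed by your ``bounded mechanical check'' at $n=83$.

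The paper avoids this with a different split: since $i + (n-i-2) = n-2$, once $n$ is large enough (the paper takes $n > 172$, having verified base cases by computer through $199$) at least one of $i$ and $n-i-2$ exceeds $85$, and one shifts \emph{that} coordinate down by $12$. The large coordinate is always $<n$ and $\ge 85 > 83$, so the inductive hypothesis applies to it directly, and no periodicity of small table values is ever invoked. Your argument can be repaired along exactly these lines; the essential point is that the index to decrement by $12$ must be chosen dynamically as the larger of the two, not by a fixed threshold on $i$ alone.
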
 

\begin{proof}
Our data starts being periodic from 73 onward with period 12. We checked it until 199. We proceed by induction assuming that the statement is true for every number below $n > 199$.

Suppose $n > 172$. If we remove a 2-by-2 square, the result is the sum of two games on a $2\times k$ and $2 \times (n-k-2)$ rectangles for $0 \leq k \leq n-2$. The set of Grundy values for these games is $\{ G(k) \otimes G(n-k-2)\}$, for $0 \leq k \leq n-2$. One of the values $k$ or $n-k-2$ is more than 85. Let us assume, without loss of generality that $k > 85$. Then $G(k) \otimes G(n-k-2) = G(k-12) \otimes G(n-k-2)$. The game on two rectangles $2\times (k-12)$ and $2 \times (n-k-2)$ appears after removing a 2-by-2 square in the game on a 2-by-$(n-12)$ rectangle.

We can make a similar argument if our move is to remove a 1-by-1 square. The result is that the set of Grundy values after a move is the same for $n$ and for $n-12$. That means the $\mex$ function produces the same Grundy value for $n$ and $n-12$.
\end{proof}

\begin{corollary}
The P-positions correspond to numbers $n = 12a +1$.
\end{corollary}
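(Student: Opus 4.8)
The plan is to combine the periodicity theorem with a finite inspection of Table~\ref{table:rectGN}. Recall from the theory of Grundy numbers that a position is a P-position precisely when its Grundy value equals $0$, so it suffices to determine for which $n$ one has $G(n) = 0$.

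First I would record the purely finite fact, read directly off Table~\ref{table:rectGN}: for every $n$ with $1 \le n \le 82$ (and more generally on the whole range $1 \le n \le 192$ covered by the table), $G(n) = 0$ if and only if $n \equiv 1 \pmod{12}$. Indeed, column~$1$ of the table --- the entries for $n = 1, 13, 25, \dots$ --- consists entirely of zeros, while every entry in columns $2$ through $12$ is nonzero. Together with $G(0) = G(1) = 0$, this settles the claim for all small $n$.

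Next I would bootstrap to all $n$ using the periodicity theorem. Given $n \ge 83$, repeatedly applying $G(n) = G(n-12)$ subtracts multiples of $12$ until we reach an integer $m$ with $71 \le m \le 82$ and $m \equiv n \pmod{12}$; at each step the argument stays $\ge 71$, so the reduction terminates inside the inspected block $[71,82]$, and $G(n) = G(m)$. By the finite check, $G(m) = 0$ iff $m \equiv 1 \pmod{12}$ iff $n \equiv 1 \pmod{12}$. Combining this with the small-$n$ case, $G(n) = 0$ exactly when $n \equiv 1 \pmod{12}$, i.e.\ $n = 12a + 1$; equivalently, the P-positions among the $2$-by-$n$ rectangles are precisely those with $n = 12a+1$ (together with the empty board $n = 0$, if one chooses to include it).

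There is no genuine obstacle here. The only things to be careful about are (i) matching the range in which periodicity is invoked ($n \ge 83$) with the range covered by the table, so that the reduction always lands in a block that has actually been verified, and (ii) stating explicitly that P-positions are exactly the Grundy-zero positions, so that the corollary follows immediately from the computation of $G$ rather than requiring a separate pairing or strategy-stealing argument.
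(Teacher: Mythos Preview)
Your proposal is correct and is exactly the argument the paper has in mind: the corollary is stated without proof because it follows immediately from the periodicity theorem together with inspection of Table~\ref{table:rectGN}, and your write-up makes this explicit, including the care that repeated application of $G(n)=G(n-12)$ for $n\ge 83$ lands in $[71,82]$, a block covered by the table.
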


The sequence of Grundy numbers is now sequence A286332 in the OEIS \cite{OEIS}.

\section{Remove-an-Edge}\label{sec:re}

This game is played on a simple graph.

\textbf{Game.} In one turn, a player is allowed to remove two neighboring vertices with all edges coming out of them. The player who does not have a turn loses.

\textbf{Example.} Consider a star graph $S_n$ with $n > 1$ vertices. In one move the first player has to remove the center of the star and all the edges. After the first move the graph has $n-2$ isolated vertices. The first player wins in one move.

\textbf{Example.} Consider a complete graph $K_n$ with $n$ vertices. A move turns it into a complete graph with $n-2$ vertices. This is a no-strategy game. The first player wins if $n = 4k+2$ or $4k+3$. Otherwise, the second player wins.

\textbf{Example.} Consider a path graph $P_n$ with $n$ vertices. This game is equivalent to a known game called \textit{domino covering} on a 1-by-$n$ rectangle. In the domino-covering game, the players take turns placing non-overlapping dominoes, aka 1-by-2 rectangles, on the board which is a 1-by-$n$ rectangle. The loser is the first player unable to move. The P-positions in this game are described in sequence A215721 \cite{OEIS}. For $n > 14$, $\text{A215721}(n) = \text{A215721}(n-5) + 34$. Equivalently, the sequence consists of numbers \{0,1,15,35\} and all the positive integers congruent to \{5,9,21,25,29\} modulo 34.

In particular, all P-positions are odd. If $n$ is even, the first player can remove the center edge dividing the game into two equivalent games. Therefore, the first player wins for even $n$.

The Grundy numbers for this game can be computed recursively. If we denote $G(n)$ the Grundy value for the 1-by-$n$ rectangle, then $G(0) = G(1) = 0$ and
\[G(n) = \mex \{G(i) \otimes 1 \otimes G(n-i-2)\}.\]
where $i \in [1,n-2]$.

The Grundy values of this game on a path are described in sequence A002187 \cite{OEIS}, that is $G(n) = \text{A002187}(n-1)$. The sequence is eventually periodic with period 34.

The object of our interest is this game on the cycle graph with $n$ vertices: $C_n$. The following lemma is straightforward:

\begin{lemma}
After the fist move, the Remove-an-Edge game on a cycle graph $C_n$ is equivalent to the Remove-an-Edge game on the path graph $P_{n-2}$.
\end{lemma}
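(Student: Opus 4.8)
The plan is to exploit the rotational symmetry of the cycle so that only one first move has to be analyzed, and then simply to identify the graph that results. First I would fix a cyclic labelling $v_1, v_2, \ldots, v_n$ of the vertices of $C_n$, with edges $v_i v_{i+1}$ for $1 \le i \le n$ (indices read modulo $n$). Any legal first move deletes a pair of adjacent vertices $v_i, v_{i+1}$ together with all edges meeting them. Since the automorphism group of $C_n$ acts transitively on its edges, every such move produces a position isomorphic to the one obtained by deleting the particular pair $v_{n-1}, v_n$; hence it suffices to analyze that single move.

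Next I would compute the remaining graph explicitly. Deleting $v_{n-1}$ and $v_n$ removes exactly the three edges $v_{n-2}v_{n-1}$, $v_{n-1}v_n$, and $v_n v_1$, leaving the vertex set $\{v_1, \ldots, v_{n-2}\}$ together with precisely the edges $v_1 v_2, v_2 v_3, \ldots, v_{n-3} v_{n-2}$. This is exactly a path on $n-2$ vertices, namely $P_{n-2}$. Because the Remove-an-Edge game depends only on the isomorphism type of the underlying graph, the position reached after the first move on $C_n$ is game-equivalent to the starting position of Remove-an-Edge on $P_{n-2}$, which is the claim.

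For completeness I would note the small cases: a cycle requires $n \ge 3$, and one checks directly that $C_3 \mapsto P_1$ and $C_4 \mapsto P_2$, consistent with the general statement. There is no real obstacle here; the only point that needs a little care is the edge bookkeeping, since the two deleted vertices are themselves adjacent and thus share an edge that must not be double counted. The transitivity observation is what keeps the argument to a single line rather than forcing a case analysis over where the first move is made.
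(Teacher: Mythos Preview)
Your argument is correct; the paper itself omits the proof entirely, labeling the lemma ``straightforward,'' and what you have written is precisely the natural verification one would supply. The use of edge-transitivity to reduce to a single first move, followed by the explicit identification of the remaining graph as $P_{n-2}$, is exactly right.
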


For the Remove-an-Edge played on a cycle graph, the N-positions are $\text{A215721}(n+2)$. This sequence is sequence  A274161 in the OEIS \cite{OEIS}: 2, 3, 7, 11, 17, 23, 27, 31, 37, 41, 45, 57, $\ldots$.

In the OEIS the sequence A274161 is described as P-positions in a different game which is called \textit{the edge-delete game}. The edge-delete game is defined on a graph $G$ as follows: Two players alternate turns, permanently deleting one edge from $G$ on each turn. Unlike our game the vertices are not removed. The game ends when a vertex is isolated in what remains of $G$. The player whose deletion creates an isolated vertex loses the game. 

The sequence A274161 is the P-positions of the edge-delete game played on a path graph $P_n$. Let us consider a domino-covering game on the 1-by-$n+2$ rectangle. We can correspond a vertex to each square cell and connect the vertices if the cells share an edge. Then we match deleting an edge to covering the vertices connected by this edge by a domino. The rule that does not allow isolated vertices means that the dominoes cannot overlap and cannot cover the ending cells. That means the edge-delete game on the path $P_n$ is the same as the domino-covering game on the 1-by-$n$ rectangle, where dominoes are not allowed to cover the end squares. The latter game is the same as the domino-covering game on the 1-by-$(n-2)$ rectangle without any end restrictions.

The sequence of P-positions of the Remove-an-Edge game on $C_n$ is the complement of sequence A274161.

\section{No-Factor game}\label{sec:nf}

\textbf{Game.} Write out integers 1 through $n$. On a player's turn, s/he may remove any set of numbers so long as all numbers removed have no proper factors existing at the beginning of the player's turn. The person who does not have a move loses.

This game can be solved using a strategy stealing argument. Surprisingly, it is the second player who is stealing.

\begin{lemma}
For $n > 1$, the second player wins in the no-factor game. 
\end{lemma}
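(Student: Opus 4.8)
The plan is to use a strategy-stealing argument, but with a twist: here it is the \emph{second} player who steals. First I would observe that from the starting position, the only numbers with no proper factors present are the primes up to $n$ together with $1$; call this set $T$ (the ``top layer''). The first player's opening move is to remove some nonempty subset $A \subseteq T$. I would then consider two cases based on whether the first player's move leaves a winning response for the second player ``for free,'' or whether the second player must appeal to the hypothetical existence of a first-player winning strategy.

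The key idea: suppose for contradiction that the first player has a winning strategy. Consider the move in which the first player removes \emph{all} of $T$ (i.e., $A = T$, removing $1$ and every prime $\le n$) on the first turn. After that move, the position consists of $\{2,\dots,n\}$ minus the primes, and crucially every composite number now has no proper factor remaining (all its prime factors were in $T$), so the resulting position is ``everything is removable in one move'' — in fact the next player (the second player) wins that subposition trivially by removing everything, \emph{unless} the composite set is empty, which happens exactly when $n \le 3$; but for $n=2,3$ one checks directly that the second player wins (first player removes a nonempty subset of $\{1,2\}$ or $\{1,2,3\}$, and the second player removes the rest). So the opening move $A=T$ is a winning first move whenever the set of composites is nonempty. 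Now the steal: whatever winning first move $A$ the first player would actually make, the second player responds as follows. If after the first player removes $A \subsetneq T$, the remaining top layer $T \setminus A$ is nonempty, the second player simply removes $T\setminus A$, reaching exactly the same position the first player could have reached by playing $A = T$ — but now with the roles reversed, so the second player is in the position of ``the player who just moved to a position that is winning for the mover.'' If $A = T$ already, the second player removes all remaining composites (nonempty by the case analysis) and wins immediately.

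More carefully, the cleanest formulation avoids contradiction entirely. I would argue: the second player's strategy is, on move two, to remove all numbers that currently have no proper factor present. After the first player's first move removes $A \subseteq T$, the numbers with no proper factor present are exactly $T \setminus A$ together with any composites all of whose prime factors lay in $A$; the second player removes all of these. I claim the resulting position $P$ is then a position from which \emph{the previous player wins}, because it is reached in an even number of moves and — here is the crux — it is symmetric under the remaining game tree in a way that lets the second player mirror. Actually the simplest correct version: after these two moves, I would show the position is \emph{independent of $A$} in the relevant sense, or directly that it is a P-position, so the second player has converted to a win. The honest approach is the contradiction one: if player 1 had a winning strategy, player 2 on move 2 plays to make the position equal $\{2,\dots,n\}$ with all primes removed (removing $T\setminus A$ and then nothing else is not always enough, so one must be slightly careful), and this is a position player 1 is forced to move from — giving player 2 the ``stolen'' winning strategy, contradiction.

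The main obstacle, and the place to be careful, is making the steal precise: the second player must be able to reach, after two moves, a position that the first player could have reached after one move, so that the first player's hypothetical strategy transfers. This works cleanly if the second player can always complete the ``remove the whole top layer'' move: i.e., given that player 1 removed $A \subseteq T$, player 2 removes $T \setminus A$ (legal, since every element of $T\setminus A$ still has no proper factor present — elements of $T$ are $1$ and primes, which have no proper factors other than $1$, and we must just check the role of $1$). The one genuine subtlety is the number $1$: is $1$ a ``proper factor'' of every number? The problem says ``proper factors existing at the beginning of the turn,'' and if $1$ counts as a proper factor of everything, then no number can be removed while $1$ is present, forcing $1$ to be removed first or alongside. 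I would resolve this by noting $1$ has no proper factor, so $1$ is always removable, and handle whether $1$ blocks other removals as dictated by the paper's convention; either way, the two-move reduction to ``all primes and $1$ gone'' goes through, and the small cases $n=2,3$ are checked by hand. Once the reduction is set up, the strategy-stealing conclusion is immediate: no position can be an N-position if the player to move there could hand a win to the opponent, so player 1 cannot have had a winning strategy, hence player 2 does.
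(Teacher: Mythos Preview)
Your proposal has a genuine gap. The central claim --- that after removing $1$ and all primes, ``every composite number now has no proper factor remaining'' --- is false: composites can have composite proper factors (e.g.\ $4 \mid 8$, $6 \mid 12$), so the position $\{\text{composites} \le n\}$ is not one where everything can be swept away in a single move. This breaks both of your attempted arguments. In the direct version, the second player's ``remove $T\setminus A$'' reply does not land on a position you can analyze. In the contradiction version, your steal does not work: for a strategy-stealing argument to go through, you need the extra element(s) the second player removes to be \emph{inert}, i.e.\ their presence or absence must not affect which other numbers are removable. The full set of primes is very much not inert, since primes are proper factors of the composites.

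The paper's proof pivots on exactly this point. Under the paper's convention $1$ counts as a proper factor of every larger integer, so the first move is \emph{forced} to be $\{1\}$. From $\{2,\dots,n\}$, the second player picks a single prime $p$ with $n/2 < p \le n$ (Bertrand's postulate). Because $2p > n$, this $p$ is a proper factor of nothing in $\{2,\dots,n\}$, so it is inert: if the first player were then to have a winning reply $S$ from $\{2,\dots,n\}\setminus\{p\}$, the set $\{p\}\cup S$ would have been a legal winning move for the second player directly from $\{2,\dots,n\}$, contradicting the assumption that the first player wins. You never isolate such an inert element, and without it the steal cannot be executed. The fix is not cosmetic; you need Bertrand (or some substitute producing an element with no multiples in range) to make the argument go.
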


\begin{proof}
The first turn must be to take 1. If $n=1$, then the first player wins. Otherwise, suppose the first player has a winning strategy. Then let the second player take the prime number $p$ that is more than $n/2$. Such a prime number always exists for $n >1$ due to Bertrand postulate. After that there exists a move for the first player to win by our assumption. As $p$ does not have any common factors with the rest of the numbers, that means that on the second move the second player can take $p$ and whatever the first player takes on the third move, thus stealing a winning strategy.
\end{proof}

\section*{Acknowledgments}
This project was part of the PRIMES STEP program. We are thankful to the program for allowing us the opportunity to conduct this research.

\end{document}